\documentclass[a4paper, 11pt
,reqno
]{amsart}

\usepackage{amsmath, amsthm, amssymb, mathrsfs, longtable, multirow}

\numberwithin{equation}{section}
\theoremstyle{plain}
 \newtheorem{thm}{Theorem}[section]
 
 \newtheorem{lem}[thm]{Lemma}
 \newtheorem{prop}[thm]{Proposition}
\theoremstyle{definition}
 
 \newtheorem{exmp}[thm]{Example}
 
\theoremstyle{remark}
 \newtheorem{rem}[thm]{Remark}

\setlength{\textwidth}{\paperwidth}
\addtolength{\textwidth}{-2.4in}
\setlength{\textheight}{\paperheight}
\addtolength{\textheight}{-2.8in}
\setlength{\headsep}{20pt}
\calclayout

\allowdisplaybreaks[3]

\title[Hypergeometric function of type $A$]%
{The hypergeometric function 
for 
the root system of type $A$ with a certain degenerate parameter}
\author[N.~Shimeno]{Nobukazu Shimeno}
\address{School of Science and Technology, Kwansei Gakuin University, 
2-1 Gakuen, Sanda, Hyogo 669-1337, Japan}
\email{shimeno@kwansei.ac.jp}
\author[Y.~Tamaoka]{Yuichi Tamaoka}
\address{Graduate School of Science and Technology, Kwansei Gakuin University, 
2-1 Gakuen, Sanda, Hyogo 669-1337, Japan}
\email{eeo61784@kwansei.ac.jp}
\subjclass[2000]{Primary~33C67, Secondary~33C65, 43A90}
\keywords{hypergeometric functions; spherical functions; root systems; Lauricella hypergeometric functions.}

\date{}

\begin{document}
\maketitle
\thispagestyle{empty}

\begin{abstract}
We express explicitly the Heckman-Opdam hypergeometric function for the root system 
of type $A$ with a certain degenerate parameter in terms of the 
 Lauricella hypergeometric function. 
\end{abstract}

\section*{Introduction}

Radial parts of zonal spherical functions on real semisimple Lie groups give a class of multivariable 
hypergeometric functions (\cite{HC1}, \cite{Hel2}). In rank one cases they are expressed by the 
Gauss hypergeometric function (\cite{Hel2}, \cite{Koornwinder}). 
Heckman and Opdam develop the theory of hypergeometric functions associated with root systems 
by generalizing zonal spherical functions (\cite{HO}, \cite{Hec}, \cite{Op:book}). 

On the other hand, generalizations of the classical hypergeometric functions of one-variable 
include hypergeometric series given by Appell, Lauricella, and Kamp\' e de F\'eriet, and 
the hypergeometric function of matrix argument (\cite{AK,IKSY, L}, \cite{BO}). 

It is of interest to identify different approaches to hypergeometric functions in 
several variables. Sekiguchi \cite{Se0, Se} shows that the zonal spherical function on 
$SL(n,\mathbb{R})$ for a certain degenerate parameter can be written by the 
Lauricella's hypergeometric function $F_D$. Tamaoka~\cite{T} shows that the Jack polynomial 
with a certain degenerate parameter can be written by Lauricella's $F_D$ (see Theorem~\ref{thm:main0} 
of this paper). 

Beerends \cite[Theorem~5.4]{B} shows that 
 the hypergeometric function associated 
with the root system of type $BC$ with a certain degenerate parameter can be written by 
the generalized  Kamp\'e de F\'eriet function. 
Beerends and Opdam \cite[Theorem~4.2]{BO} give a precise relation between  
the hypergeometric function of matrix argument and the hypergeometric function associated 
with the root system of type $BC$ with  a certain degenerate parameter. 

In the present paper we express explicitly the hypergeometric function associated with the root system 
of type $A$ with a certain degenerate parameter in terms of Lauricella's $F_D$ (Theorem~\ref{thm:main}). 
This result is indicated in \cite{S} without proof. It can be regarded as a 
generalization of a result of Sekiguchi \cite{Se0, Se} for 
a zonal spherical function on $SL(n,\mathbb{R})$ 
and that of Tamaoka \cite{T} for the Jack polynomial. Though our main result 
(Theorem~\ref{thm:main}) might be known 
for specialists, the result has never been stated in this precise form in the literature as far as we know. 

This paper is organized as follows. In \S1 we review on the hypergeometric function associated with the 
root system of type $A$  and define a certain degenerate parameter. In \S2 we prove our main theorem 
by using a system of differential equations of second order. 
We remark on the case of the Jack polynomial in \S3 and 
 relate our second order differential operators with trigonometric Dunkl operators in \S4.

\section{Hypergeometric function for the root system of type $A_{n-1}$}\label{sect:hg}

In a series of papers starting from \cite{HO}, Heckman and Opdam develop the theory of  the hypergeometric 
function associated with a root system. 
In this section we review on hypergeometric function for the root system of type $A_{n-1}$. 
We refer \cite{Hec}, \cite{Op:book} for details. 

Let $n$ be a positive integer greater than $1$ and equip $\mathbb{R}^n$ with the standard inner product $(\,\,,\,\,)$. 
Consider the subspace $\mathfrak{a}$ of $\mathbb{R}^n$ defined by 
\[
\mathfrak{a}=\{t\in\mathbb{R}^n\,:\,t_1+\cdots+t_n=0\}.
\]
We identify $\mathfrak{a}^*$ with $\mathfrak{a}$ by the inner product  $(\,\,,\,\,)$. 
Let $e_i$ denote the element of $\mathbb{R}^n$ with $i$-th entry $1$ and all the other entries $0$. 
Define
\[
R=\{e_j-e_i\,:\,1\leq i\not=j\leq n\},
\]
which a root system of type $A_{n-1}$. 
The Weyl group $W$ for $R$ is isomorphic to $S_n$. 

Let 
$\mathfrak{a}_\mathbb{C}^*$ denote the space of complex-valued linear functions on $\mathfrak{a}$. 
By the correspondence $\mathfrak{a}_\mathbb{C}^*\ni \lambda=\lambda_1e_1+\cdots+\lambda_n e_n \mapsto 
(\lambda_1,\dots,\lambda_n)\in \mathbb{C}^n$, we have the following identification:
\[
\mathfrak{a}_\mathbb{C}^*\simeq \{(\lambda_1,\dots,\lambda_n)\in\mathbb{C}^n:\lambda_1+\cdots+\lambda_n=0\}.
\]
The Weyl group $W\simeq S_n$ acts on $\mathfrak{a}_\mathbb{C}^*$ by 
$w\lambda=(\lambda_{w^{-1}(1)},\dots,\lambda_{w^{-1}(n)})$ $(w\in W,\,\lambda\in\mathfrak{a}_\mathbb{C}^*)$. 

Let $R_+$ denote the set of positive roots defined by
\[
R_+=\{e_j-e_i\,:\,1\leq i<j\leq n\}.
\]
Let $P_+$ denote the set of dominant integral weights
\begin{align*}
P_+&=\{\lambda\in\mathfrak{a}_\mathbb{C}^*: (\lambda,\alpha^\vee)\in \mathbb{Z}_+\,\,(\alpha\in R_+)\} \\
& \simeq \{(\lambda_1,\dots,\lambda_n): \lambda_1+\cdots+\lambda_n=0,\,\,\lambda_j-\lambda_i\in\mathbb{Z}_+\,\,
(1\leq i<j\leq n)\}.
\end{align*}
Here $\mathbb{Z}_+$ is the set of non-negative integers and 
$\alpha^\vee$ is the coroot of $\alpha$ defined by  $\alpha^\vee=2\alpha/(\alpha,\alpha)$. 
For $k\in\mathbb{C}$, define 
\begin{equation}\label{eqn:rho}
\rho(k)=\frac{k}{2}\sum_{\alpha\in R_+}\alpha
=\left(-\frac{n-1}{2}k,-\frac{n-3}{2}k,\dots,\frac{n-1}{2}k\right).
\end{equation}

Put
\[
 A=\exp\mathfrak{a}=\{z\in \mathbb{R}_{>0}^n\,:\,z_1\cdots z_n=1\}\subset \mathbb{R}_{>0}^n
\]
and $\vartheta_i=z_i\displaystyle\frac{\partial}{\partial z_i}$ $(1\leq i\leq n)$. 
The Weyl group $W\simeq S_n$ acts on $A$ by
\[
w(z_1,\dots,z_n)=(z_{w^{-1}(1)},\dots,z_{w^{-1}(n)}).
\]
We employ ``GL''-picture for convenience. We consider a function $\phi$ on $\mathbb{R}_{>0}^n$ 
 and impose the differential equation
\[
(\vartheta_1+\cdots+\vartheta_n)\phi=0
\]
to give a function on $A$. 
For $\lambda\in\mathfrak{a}_\mathbb{C}^*$ write 
$z^\lambda=z_1^{\lambda_1}\cdots z_n^{\lambda_n}$. 

Define the differential operator $L(k)$ by 
\begin{equation}\label{eqn:lk}
L(k)=\sum_{i=1}^n \vartheta_i^2+k\sum_{1\leq i<j\leq n}\frac{z_i+z_j}{z_i-z_j}(\vartheta_i-\vartheta_j).
\end{equation}
There exist a commutative algebra $\mathbb{D}(k)$ 
of  $W$-invariant differential operators containing $L(k)$ and an algebra 
isomorphism $\gamma:\mathbb{D}(k)\rightarrow S(\mathfrak{a}_\mathbb{C})^W$, 
the set of $W$-invariant elements of the symmetric algebra of $\mathfrak{a}_\mathbb{C}
$. 
In particular, we have $\gamma(L(k))(\lambda)=(\lambda,\lambda)-(\rho(k),\rho(k))$. 

\begin{rem}
If $k=1/2$, then $\mathbb{D}(k)$ is the set of the radial parts of invariant differential operators 
on $SL(n,\mathbb{R})/SO(n)$. For general $k$, the commutative algebra $\mathbb{D}(k)$ was 
first constructed by Sekiguchi \cite{Se0, Se2} giving a set of generators explicitly. 
In \S\ref{sect:dunkl}, we review Cherednik's construction of $\mathbb{D}(k)$ by 
the trigonometric Dunkl operators. 
\end{rem}

Let $Q$ be the $\mathbb{Z}$-span of $R$ and let 
$Q_+$ be the $\mathbb{Z}_+$-span of $R_+$. 
There exists a unique solution $\varphi(z)=\Phi(\lambda,k;z)$ on $A_+
:=\{z\in A\,:\,z_1<z_2<\cdots<z_n\}$ for
\begin{equation}\label{eqn:hoseco}
L(k)\varphi=
((\lambda,\lambda)-(\rho(k),\rho(k)))\varphi
\end{equation}
of the form
\begin{equation}
\label{eqn:hcser}
\Phi(\lambda,k;z)=\sum_{\mu\in Q_+}
\Gamma_\mu(\lambda,k)z^{\lambda-\rho(k)-\mu},
\quad \Gamma_0(\lambda,k)=0, 
\end{equation}
where the coefficients $\Gamma_\mu(\lambda,k)$ are rational function in 
$\lambda$ with possible poles 
at the hyperplane $H_{\mu}$ for some $\mu<0$, with
\[
H_\mu=\{\lambda\in\mathfrak{a}_\mathbb{C}^*\,:\,(2\lambda+\mu,\mu)=0\}.
\]
Moreover, $\varphi=\Phi(\lambda,k)$ also satisfies
\begin{equation}
\label{eqn:hgs}
D\varphi=\gamma(D)(\lambda)\varphi\quad (D\in \mathbb{D}(k)).
\end{equation}
From \cite[Proposition 4.2.5]{Hec} the apparent simple pole of $\Phi(\lambda,k)$ along $H_\mu$ is removable 
unless $\mu=n\alpha$ for some $n\in -\mathbb{Z}_+$ and $\alpha\in R_+$. 
We call $\lambda\in\mathfrak{a}_\mathbb{C}^*$ generic if $(\lambda,\alpha^\vee)\not\in\mathbb{Z}$ for all 
$\alpha\in R$. 
$\lambda=(\lambda_1,\dots,\lambda_n)\in \mathfrak{a}_\mathbb{C}^*$ is generic if and only if 
$\lambda_i-\lambda_j\not\in\mathbb{Z}\,\,(1\leq i<j\leq n)$. 
If $\lambda\in\mathfrak{a}_\mathbb{C}^*$ is generic, then 
\[
\{\Phi(w\lambda,k)\,:\,w\in W\}
\]
forms a basis of the solution space of (\ref{eqn:hgs}) on $A_+$ (\cite[Corollary 4.2.6]{Hec}). 

Let $\Gamma(\,\cdot\,)$ denote the Gamma function. 
Let $\tilde{c}(\lambda,k)$ and $c(\lambda,k)$ denote the meromorphic functions defined by 
\begin{align}
& \tilde{c}(\lambda,k)=\prod_{1\leq i<j\leq n}\frac{\Gamma(\lambda_j-\lambda_i)}{\Gamma({\lambda_j-\lambda_i}+k)}, \label{eqn:tcf}\\
& c(\lambda,k)=\frac{\tilde{c}(\lambda,k)}{\tilde{c}(\rho(k),k)}. \label{eqn:cf}
\end{align}
For $k=1/2,\,1,\,2$, $c(\lambda,k)$ agrees with Gindikin-Karpelevich's product formula for
 Harish-Chandra's $c$-function on $G/K=SL(n,\mathbb{K})/SO(n,\mathbb{K})$, 
where $\mathbb{K}=\mathbb{R},\,\mathbb{C},\,\mathbb{H}$, respectively. 

The denominator in (\ref{eqn:cf}) is given explicitly as follows. 
\[
\tilde{c}(\rho(k),k)=\prod_{1\leq i<j\leq n}\frac{\Gamma((j-i)k)}{\Gamma((j-i+1)k)}=\prod_{j=2}^n\left(\frac{\Gamma(k)}{\Gamma(jk)}\right).
\]
Let $S$ denote the set of poles of of $1/\tilde{c}(\rho(k),k)$, that is
\begin{equation}\label{eqn:singk}
S=\{k\in\mathbb{C}\setminus\mathbb{Z}_{<0}\,:\,jk\in \mathbb{Z}_{<0}\text{ for some }j=2,3,\dots, n\}.
\end{equation}
For $k\in \mathbb{C}\setminus S$ and generic $\lambda$ define
\begin{equation}\label{eq:connection}
F(\lambda,k)=\sum_{w\in W}c(w\lambda,k)\Phi(w\lambda,k).
\end{equation}
Heckman and Opdam proved that $\varphi(z)=F(\lambda,k;z)$ extends to an entire 
function of $\lambda\in\mathfrak{a}_\mathbb{C}^*,\,k\in \mathbb{C}\setminus S$ and 
$z$ in a tubular neighborhood of $A$ in $A_\mathbb{C}$ and 
is a unique $W$-invariant 
real analytic solution of 
(\ref{eqn:hgs})  on $A$ such that $\phi(\mathbf{1})=1$ (\cite[Part I, Chapter 4]{Hec}, \cite[\S 6.3]{Op:book}, 
\cite[Corollary 4.8]{OS}). 

If $k=1/2,\,1$, or $2$, then $F(\lambda,k)$ is the restriction to $A$ of the zonal spherical function 
on $G/K=SL(n,\mathbb{K})/SO(n,\mathbb{K})$ with $\mathbb{K}=\mathbb{R},\,\mathbb{C}$, or $
\mathbb{H}$, respectively. Here $A$ is the maximally split abelian subgroup of $G$ with the 
Cartan decomposition $G=KAK$. 

If $k\geq 0$ and $\mu\in P_+$, then 
from \cite[(4.4.10)]{Hec} we have 
\begin{equation}\label{eqn:hojacobi}
F(\mu+\rho(k),k)=c(\mu+\rho(k),k)P(\mu,k), 
\end{equation}
where $P(\mu,k)$ is the Jacobi polynomial of Heckman and Opdam.

Let $n$ be an integer greater than $1$. 
For $\nu\in\mathbb{C}$ define $\lambda(\nu,k)\in\mathfrak{a}_\mathbb{C}^*$ by
\begin{equation}\label{eqn:degpar}
 \lambda(\nu,k)=\left(-\frac{\nu}{n},\cdots,-\frac{\nu}{n},\frac{(n-1)\nu}{n}\right)+\rho(k).
\end{equation}
It follows from the definition that 
\begin{equation}\label{eqn:cfdeg}
c(\lambda(\nu,k),k)=\frac{\Gamma(nk)\Gamma(\nu+k)}{\Gamma(k)\Gamma(\nu+nk)}.
\end{equation}
For $\nu\in\mathbb{Z}_+$ it can be written by the shifted factorial
\begin{equation}\label{eqn:cfdeg2}
c(\lambda(\nu,k),k)=\frac{(k)_\nu}{(nk)_\nu}.
\end{equation}
Here the shifted factorial is defined by $(a)_0=1$ and $(a)_n=a(a+1)\cdots (a+n-1)$ for $n\in\mathbb{Z}_{>0}$. 

Notice that $\lambda(\nu,k)$ is generic if and only if 
\begin{equation}\label{eqn:generic}
pk\not\in \mathbb{Z}\,\,(1\leq p\leq n-2)\,\,\text{ and }\,\,\nu+qk\not\in\mathbb{Z}\,\,(1\leq q\leq n-1).
\end{equation}

Let $W_\Theta$ denote the permutation group of $\{1,\dots,n-1\}$ and $W^\Theta$ denote the 
set of representatives of minimal length for the coset $W_\Theta\backslash W$. That is, $W^\Theta$ consists of the elements
\[
w_1=
e
,\,\,
w_2=\begin{pmatrix}1 & 2 & \cdots & n-2& n-1 & n \\ 1 & 2 & \cdots & n-2 & n & n-1\end{pmatrix},\dots,\,\,
w_n=\begin{pmatrix}1 & 2 & 3 & \cdots & n \\ n & 1 & 2 & \cdots & n-1\end{pmatrix}.
\]
Here $w_i\,\,(1\leq i\leq n)$ is the element of $S_n$ of minimal length such that $w_i(n+1-i)=n$. 

We have the following proposition for the hypergeometric function with the degenerate parameter. 

\begin{prop}\label{prop:deghs}
\emph{(i)}  
Assume $k\in \mathbb{C}\setminus 
S$ and $\lambda(\nu,k)$ is generic \emph{(\eqref{eqn:singk}, \eqref{eqn:generic})}. 
Then 
\[
F(\lambda(\nu,k),k;z)=\sum_{i=1}^n c(w_i\lambda(\nu,k),k)\Phi(w_i\lambda(\nu,k),k;z) \quad
 (z\in A_+).
\]
\emph{(ii)} Assume $\nu\in \mathbb{Z}_+$ and $qk\not\in \mathbb{Z}_{<0}$ for any $1\leq q\leq n$. Then 
\[
F(\lambda(\nu,k),k;z)=
\frac{(k)_\nu}{(nk)_\nu}
\Phi(\lambda(\nu,k),k;z)\quad (z\in A). 
\]
\end{prop}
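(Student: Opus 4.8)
The plan is to start from the connection formula \eqref{eq:connection}, $F(\lambda,k)=\sum_{w\in W}c(w\lambda,k)\Phi(w\lambda,k)$, and to prove that for $\lambda=\lambda(\nu,k)$ the coefficient $c(w\lambda,k)$ vanishes unless $w$ is one of the $n$ representatives $w_1,\dots,w_n$. Writing $\lambda=\lambda(\nu,k)$ and using \eqref{eqn:tcf} together with $(w\lambda)_i=\lambda_{w^{-1}(i)}$,
\[
\tilde c(w\lambda,k)=\prod_{1\le i<j\le n}\frac{\Gamma(\lambda_{w^{-1}(j)}-\lambda_{w^{-1}(i)})}{\Gamma(\lambda_{w^{-1}(j)}-\lambda_{w^{-1}(i)}+k)}.
\]
The decisive feature of $\lambda(\nu,k)$ is that its first $n-1$ coordinates satisfy $\lambda_{p+1}-\lambda_p=k$ for $1\le p\le n-2$, while the last coordinate is separated by the ``$\nu$''-gap. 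Hence, whenever there are positions $i<j$ with $w^{-1}(i)=w^{-1}(j)+1\le n-1$, the corresponding difference equals $-k$ and that factor becomes $\Gamma(-k)/\Gamma(0)=0$; such a pair exists precisely when $w$ is not the minimal-length representative of its coset modulo $W_\Theta$, i.e.\ when $w\notin\{w_1,\dots,w_n\}$. Since $\lambda(\nu,k)$ is generic, each $\Phi(w\lambda,k)$ is regular and the remaining Gamma-factors are finite and nonzero, so every non-representative term vanishes identically and (i) follows.

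For (ii) I would use that $\nu\in\mathbb{Z}_+$ makes $\mu:=\lambda(\nu,k)-\rho(k)=(-\nu/n,\dots,-\nu/n,(n-1)\nu/n)$ a dominant integral weight in $P_+$. Then \eqref{eqn:hojacobi} gives $F(\lambda(\nu,k),k)=c(\lambda(\nu,k),k)P(\mu,k)$, and by \eqref{eqn:cfdeg2} the prefactor equals $(k)_\nu/(nk)_\nu$; it therefore suffices to prove $P(\mu,k)=\Phi(\lambda(\nu,k),k)$, that is, that every term with $w\ne e$ in \eqref{eq:connection} drops out. The mechanism of (i) already kills every $w$ having a descent among the first $n-1$ indices, so the only surviving candidates are the $w_i$ themselves. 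For $w_i\ne e$ the coordinate $\lambda_n$ occurs to the left of $\lambda_{n-1}$ in $w_i\lambda$, so the factor carrying the difference $\lambda_{n-1}-\lambda_n=-(\nu+k)$ equals $\Gamma(-\nu-k)/\Gamma(-\nu)$, which vanishes because $\Gamma(-\nu)$ has a pole for $\nu\in\mathbb{Z}_{\ge 0}$. Hence only $w=e$ remains, giving $F(\lambda(\nu,k),k)=c(\lambda(\nu,k),k)\Phi(\lambda(\nu,k),k)$, as asserted.

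The step I expect to be the main obstacle is the $0\cdot\infty$ phenomenon lurking in (ii): the hypothesis there is only $qk\notin\mathbb{Z}_{<0}$, not full genericity, so $\lambda(\nu,k)$ may lie on a pole hyperplane of some $\Phi(w\lambda,k)$ or render an individual Gamma-factor of $\tilde c$ infinite, and then the bare ``that factor is $0$'' reasoning is not conclusive. I would circumvent this by establishing the identity first for generic $k$, where the computation above is unambiguous, and then propagating it to all admissible $k$ by analytic continuation in $k$: the left-hand side $F(\lambda(\nu,k),k)$ is holomorphic in $k$ on $\mathbb{C}\setminus S$, while $(k)_\nu/(nk)_\nu$ and $\Phi(\lambda(\nu,k),k)$ are meromorphic and regular for $qk\notin\mathbb{Z}_{<0}$, so the equality persists. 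A minor bookkeeping point, which I regard as routine, is to match the representatives singled out by the nonvanishing of $c$ with the explicit $w_i$ of the statement, i.e.\ to translate the descent condition on $\{1,\dots,n-1\}$ into membership in $W^\Theta$.
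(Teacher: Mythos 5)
Your proposal follows essentially the same route as the paper's own proof: part (i) is obtained by showing that a descent of $w$ inside $\{1,\dots,n-1\}$ forces a factor $\Gamma(-k)/\Gamma(0)=0$ in $\tilde{c}(w\lambda(\nu,k),k)$, and part (ii) by noting that for the non-identity representatives the factor $\Gamma(-\nu-k)/\Gamma(-\nu)$ (which the paper exhibits via the explicit formula for $c(w_i\lambda(\nu,k),k)$) vanishes for $\nu\in\mathbb{Z}_+$, then invoking $\Phi(\lambda(\nu,k),k)=P(\lambda(\nu,k)-\rho(k),k)$, \eqref{eqn:hojacobi}, \eqref{eqn:cfdeg2}, and analytic continuation in $k$. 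This is the paper's argument in all essentials, so no further comparison is needed.
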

\begin{proof}
First we prove (1). The case of $k=0$ is trivial. In addition to the assumption of the proposition, 
we assume $k\not=0$. 
If $w\in W\,\setminus \,W^\Theta$, then there exists $l\,\,(1\leq l\leq n-2)$ such that $w(l)>w(l+1)$. 
Then $w\lambda(\nu,k)_j-w\lambda(\nu,k)_i=\lambda(\nu,k)_l-\lambda(\nu,k)_{l+1}=-k$ for $j=w(l)$ and 
$i=w(l+1)$. By the definition \eqref{eqn:tcf} and \eqref{eqn:cf} of the $c$-function, $c(w\lambda(\nu,k),k)=0$ unless 
$w\in W^\Theta$. 

It follows from the definition that
\[
c(w_i\lambda(\nu,k),k)=\frac{\Gamma(nk)\Gamma(\nu+ik)\Gamma(-\nu-(i-1)k)}{\Gamma(k)\Gamma(\nu+nk)\Gamma(-\nu)}
\quad (2\leq i\leq n). 
\]
Thus $c(w_i\lambda(\nu,k),k)=0\,\,(2\leq i\leq n)$ for $\nu\in\mathbb{Z}_+$ and 
the equation of (2) holds on $A_+$ under the assumption of (1). 
Notice that $\lambda(\nu,k)-\rho(k)\in P_+$ if and only if $\nu\in \mathbb{Z}_+$. 
Thus we have $\Phi(\lambda(\nu,k),k)=P(\lambda(\nu,k)-\rho(k),k)$ and (2) follows from 
\eqref{eqn:hojacobi} and \eqref{eqn:cfdeg} by analytic 
continuation.  
\end{proof}

In \S\ref{sect:second} we will show that $F(\lambda(\nu,k),k)$ can be written by the 
Lauricella hypergeometric function $F_D$.

\section{A system of hypergeometric differential equations of second order}\label{sect:second}
Let $n$ be an integer greater than $1$ and $k$ be a complex number. Let
$z=(z_1,z_2,\dots,z_n)$ denote a variable in $\mathbb{R}^n$ and put 
$\displaystyle\vartheta_i=z_i\frac{\partial}{\partial z_i}\,\,(1\leq i\leq n)$. 
For $1\leq i<j\leq n$ define differential operator $\Delta_{ij}$ by
\begin{equation}\label{eqn:dij}
\Delta_{ij}=\vartheta_i\vartheta_j-\frac{k}{2}\frac{z_i+z_j}{z_i-z_j}(\vartheta_i-\vartheta_j)+\left(\frac{\nu}{n}+\frac{k}{2}\right)(\vartheta_i+\vartheta_j).
\end{equation}
We consider the system of differential equations
\begin{align}
& (\vartheta_1+\cdots+\vartheta_n)\varphi=0, \label{eqn:euler} \\
&  \Delta_{ij}\phi=-\frac{\nu(\nu+nk)}{n^2}\varphi \quad  (1\leq i<j\leq n). \label{eqn:deq1}
\end{align}

By summing up (\ref{eqn:deq1}) for $1\leq i<j\leq n$ and using (\ref{eqn:euler}) we have
\begin{equation}\label{eqn:casimir}
\left(\sum_{1\leq i<j\leq n}\vartheta_i\vartheta_j-\frac{k}{2}\frac{z_i+z_j}{z_i-z_j}(\vartheta_i-\vartheta_j)\right)\varphi
=-\frac{(n-1)\nu(\nu+nk)}{2n}\varphi
\end{equation}
The differential operator in the left hand side of (\ref{eqn:casimir}) is 
equivalent to $-\frac12$ times the second order hypergeometric differential operator 
$L(k)$ given in (\ref{eqn:lk})  
and the coefficients of $\varphi$ in the right hand side of (\ref{eqn:casimir}) is 
$-\frac12\{(\lambda(\nu,k),\lambda((\nu,k))-(\rho(k),\rho(k))\}$. 
Hence (\ref{eqn:casimir}) is equivalent to (\ref{eqn:hoseco}) with 
$\lambda=\lambda(\nu,k)$. 

If $k=1/2$, then (\ref{eqn:deq1}) are radial parts of the differential equations satisfied by the zonal spherical function on $G=SL(n,\mathbb{R})$ 
that is the Poisson integral of a $SO(n)$-invariant section of a degenerate principal 
series representation on $G/P_\Theta$, where $P_\Theta$ is a maximal parabolic subgroup of $G$ whose Levi part is 
isomorphic to $GL(n-1,\mathbb{R})$. These differential equations on $G$ are 
given by Oshima \cite{O} 
using generalized Capelli operators in $U(\mathfrak{gl}(n,\mathbb{C}))$. Moreover, (\ref{eqn:casimir}) is the radial part of the differential equation 
corresponding to the Casimir operator. 

%

By the change of variables 
\begin{equation}\label{eqn:change}
y_i=\frac{z_i}{z_n}\quad (1\leq i\leq n-1),\quad y_n=z_n, 
\end{equation}
we have
\[
\vartheta_i=y_i\frac{\partial}{\partial y_i}\quad (1\leq i\leq n-1),\quad \vartheta_1+\cdots+\vartheta_n=y_n\frac{\partial}{y_n}.
\]
Hence, (\ref{eqn:euler}) means that $\varphi$ does not depend on $y_n$. Operators (\ref{eqn:dij}) become
\begin{align}
& \Delta_{ij}=\vartheta_i\vartheta_j-\frac{k}{2}\frac{y_i+y_j}{y_i-y_j}(\vartheta_i-\vartheta_j)+\left(\frac{\nu}{n}+\frac{k}{2}\right)(\vartheta_i+\vartheta_j)
\quad (1\leq i<j\leq n-1), \label{eqn:dij2}\\
& \Delta_{in}=\vartheta_i(\vartheta_1+\cdots+\vartheta_{n-1})-\frac{k}{2}\frac{y_i+1}{y_i-1}(\vartheta_i+\vartheta_1+\cdots+\vartheta_{n-1}) \notag \\
& \phantom{aaaaaaaaaaaaaaaaaaa}
+\left(\frac{\nu}{n}+\frac{k}{2}\right)(\vartheta_i-(\vartheta_1+\cdots+\vartheta_{n-1}))\quad (1\leq i\leq n-1). \label{eqn:dij3}
\end{align}
Here we write $\vartheta_i=y_i\frac{\partial}{\partial y_i}\,\,(1\leq i\leq n-1)$ in (\ref{eqn:dij2}) and (\ref{eqn:dij3}). 
Putting 
\begin{equation}\label{eqn:gauge}
\varphi(y_1,\dots,y_{n-1})=(y_1\cdots y_{n-1})^{-\frac{\nu}{n}}u(y_1,\dots,y_{n-1}),
\end{equation}
the differential equations (\ref{eqn:deq1}) become the following differential 
equations for $u$:
\begin{align}
& \left(\vartheta_i\vartheta_j-k\frac{y_j\vartheta_i-y_i\vartheta_j}{y_i-y_j}\right)u=0\quad (1\leq i<j\leq n-1), \label{eqn:deq2}\\
& \left(-\left(\vartheta_1+\cdots+\vartheta_{n-1}-\nu+\frac{k}{y_i-1}\right)\vartheta_i \right. \notag \\
& \phantom{aaaaaaaaaaaaaaa}\left.
-\frac{ky_i}{y_i-1}(\vartheta_1+\cdots+\vartheta_{n-1}-\nu)\right)u=0\quad (1\leq i\leq n-1), \label{eqn:deq3} 
\end{align}
which can be written in the following form:
\begin{align}
& y_i(\vartheta_i+k)\vartheta_ju=y_j(\vartheta_j+k)\vartheta_iu\quad (1\leq i<j\leq n) \label{eqn:lauricella1}, \\
 & \vartheta_i(\vartheta_1+\cdots+\vartheta_{n-1}-\nu-k)u \notag \\ 
& \phantom{aaaaaa}=y_i(\vartheta_i+k)(\vartheta_1+\cdots+\vartheta_{n-1}-\nu)u 
\quad (1\leq i\leq n-1). \label{eqn:lauricella2}
\end{align}

We recall Lauricella's $F_D$ of $n-1$ variables and the corresponding system $E_D$ of differential equations of rank $n$. 
Lauricella's hypergeometric function $F_D$ is the analytic continuation of the series
\begin{align}
F_D & (\alpha,\beta_1,\dots,\beta_{n-1},\gamma;\,y_1,\dots,y_{n-1}) \notag \\
&\phantom{aaaa}=\sum_{m_1,\dots,m_{n-1}\geq 0}
\frac{(\alpha)_{m_1+\cdots+m_{n-1}}(\beta_1)_{m_1}\cdots (\beta_{n-1})_{m_{n-1}}}{(\gamma)_{m_1+\cdots+m_{n-1}}m_1!\cdots m_{n-1}!}
y_1^{m_1}\cdots y_{n-1}^{m_{n-1}}, \label{eqn:lauricellas}
\end{align}
where $\alpha,\,\beta_1,\dots,\beta_{n-1},\,\gamma$ are complex constants 
with $\gamma\not=-1,-2,\dots$. 
It satisfies the following system of differential equations:
\begin{equation}\tag{$E_D$}
\left\{\begin{alignedat}{1}
& y_i(\vartheta_i+\beta_i)\vartheta_j F=y_j(\vartheta_j+\beta_j)\vartheta_i F\,\, (1\leq i<j\leq n), \\
&  \vartheta_i(\vartheta_1+\cdots +\vartheta_{n-1}+\gamma-1)F \\ 
& \phantom{aaaaaaaaaaaa}=y_i(\vartheta_i+\beta_i)(\vartheta_1+\cdots+\vartheta_{n-1}+\alpha)F
\,\, (1\leq i<j\leq n).
\end{alignedat}\right.
\end{equation}
The system $(E_D)$ is holonomic of rank $n$. If $\gamma\not=-1,-2,\dots$, then 
Lauricella's hypergeometric function $F_D(\alpha,\beta_1,\dots,
\beta_{n-1},\gamma;y_1,\dots,y_{n-1})$ is the unique analytic solution 
of $(E_D)$ such that $F(0)=1$. 
We refer \cite{L}, \cite[\S 9.1]{IKSY}, and \cite{MS} on  
Lauricella's $F_D$. 

Equations (\ref{eqn:lauricella1}) and (\ref{eqn:lauricella2}) constitute $(E_D)$ 
with 
\begin{equation}\label{eqn:abc}
\alpha=-\nu,\quad\beta_1=\cdots \beta_{n-1}=k,\quad \gamma =-\nu-k+1.
\end{equation}

By the change of variables
\begin{equation}\label{eqn:cv1}
x_i=1-y_i\quad (1\leq i\leq n), 
\end{equation}
(\ref{eqn:lauricella1}) give equations of the same form
\begin{equation}
x_i(\vartheta_i+k)\vartheta_j u=x_j(\vartheta_j+k)\vartheta_i u\quad (1\leq i\leq n). \label{eqn:lauricella3}
\end{equation}
Here we write $\vartheta_i=x_i\frac{\partial}{\partial x_i}\,\,(1\leq i\leq n)$. 
By (\ref{eqn:lauricella2}) and (\ref{eqn:lauricella3}), we have the following equations.
\begin{equation}
\vartheta_i(\vartheta_1+\cdots+\vartheta_{n-1}+nk-1)u=x_i(\vartheta_i+k)(\vartheta_1+\cdots+\vartheta_{n-1}-\nu)u
\quad (1\leq i\leq n). \label{eqn:lauricella4}
\end{equation}
Equations (\ref{eqn:lauricella3}) and (\ref{eqn:lauricella4}) constitute $(E_D)$ with 
\begin{equation}
\alpha=-\nu,\quad \beta_1=\cdots=\beta_{n-1}=k,\quad \gamma=nk.
\end{equation}
Consequently, if $nk\not=-1,-2,\dots$, then 
\begin{equation}\label{eqn:symsol1}
\varphi(y)=(y_1\cdots y_{n-1})^{-\frac{\nu}{n}}F_D(
-\nu,k,\dots,k,nk;\,1-y_1,\dots,1-y_{n-1})
\end{equation}
is the unique analytic solution for (\ref{eqn:euler}) and (\ref{eqn:deq1}) 
satisfying $\varphi(1,\dots,1)=1$. 

The symmetric group $S_n$ acts on the variable $z=(z_1,\dots,z_n)$ as permutations, hence it acts on the variable $y=(y_1,\dots,y_{n-1})$. 

\begin{lem}
The function $\varphi$ in \eqref{eqn:symsol1} is $S_n$-invariant. 
\end{lem}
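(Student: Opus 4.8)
The plan is to deduce the $S_n$-invariance from the uniqueness of the normalized analytic solution established immediately before the lemma, rather than to verify it by hand from the formula \eqref{eqn:symsol1}. Working directly in the $y$-coordinates would be unpleasant, since $y_i=z_i/z_n$ singles out $z_n$ and the induced $S_n$-action is birational (for instance the transposition interchanging $z_i$ and $z_n$ sends $y_i\mapsto 1/y_i$ and $y_j\mapsto y_j/y_i$ for $j\neq i$). Instead I would regard $\varphi$ as a function on $A$ in the $z$-coordinates, where the $S_n$-action is simply the permutation of coordinates, and exploit the manifest symmetry of the defining system \eqref{eqn:euler}, \eqref{eqn:deq1}.

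The key structural point is that each operator $\Delta_{ij}$ of \eqref{eqn:dij} is symmetric under interchanging $i$ and $j$, so that $S_n$ permutes the collection $\{\Delta_{ij}:1\le i<j\le n\}$ among itself, every equation in \eqref{eqn:deq1} carrying one and the same eigenvalue $-\nu(\nu+nk)/n^2$, while the Euler operator $\vartheta_1+\cdots+\vartheta_n$ is $S_n$-invariant. To turn this into a statement about solutions, for $w\in S_n$ set $(w\cdot\varphi)(z)=\varphi(w^{-1}z)$. With the paper's convention $(wz)_i=z_{w^{-1}(i)}$ the chain rule gives the conjugation rule
\[
\vartheta_i\,(w\cdot f)=w\cdot(\vartheta_{w^{-1}(i)}f),
\]
and together with $w\cdot(gf)=(w\cdot g)(w\cdot f)$ and the corresponding transformation $\frac{z_i+z_j}{z_i-z_j}=w\cdot\frac{z_{w^{-1}(i)}+z_{w^{-1}(j)}}{z_{w^{-1}(i)}-z_{w^{-1}(j)}}$ of the rational coefficient, a short computation yields
\[
\Delta_{ij}(w\cdot\varphi)=w\cdot\bigl(\Delta_{w^{-1}(i),\,w^{-1}(j)}\varphi\bigr).
\]

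Since $\Delta_{ab}=\Delta_{ba}$ and $\Delta_{ab}\varphi=-\frac{\nu(\nu+nk)}{n^2}\varphi$ for every pair, the right-hand side equals $-\frac{\nu(\nu+nk)}{n^2}(w\cdot\varphi)$; likewise $(\vartheta_1+\cdots+\vartheta_n)(w\cdot\varphi)=w\cdot\bigl((\vartheta_1+\cdots+\vartheta_n)\varphi\bigr)=0$. Thus $w\cdot\varphi$ solves the same system \eqref{eqn:euler}, \eqref{eqn:deq1}. It is analytic in a neighborhood of $\mathbf{1}=(1,\dots,1)$, because $w$ acts analytically and fixes $\mathbf{1}$, and $(w\cdot\varphi)(\mathbf{1})=\varphi(w^{-1}\mathbf{1})=\varphi(\mathbf{1})=1$ as $\mathbf{1}$ is $S_n$-fixed. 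By the uniqueness of the normalized analytic solution of \eqref{eqn:euler}, \eqref{eqn:deq1} established just before the lemma, it follows that $w\cdot\varphi=\varphi$ for all $w\in S_n$, which is the asserted invariance. I expect the only real work to be the verification of the operator identity $\Delta_{ij}(w\cdot\varphi)=w\cdot(\Delta_{w^{-1}(i),w^{-1}(j)}\varphi)$, which is routine once the conjugation rule is in hand; the conceptual obstacle is purely one of bookkeeping, namely resisting the temptation to argue in the $y$-coordinates and instead restoring the symmetry by working on $A$ in the $z$-coordinates.
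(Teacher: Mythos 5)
Your proof is correct, but it follows a genuinely different route from the paper's. The paper verifies the invariance directly from the explicit formula \eqref{eqn:symsol1} in the $y$-coordinates: for the transpositions $(i,i+1)$ with $1\le i\le n-2$ it uses the symmetry of $F_D$ in its variables when $\beta_1=\cdots=\beta_{n-1}=k$, and for the remaining generator $(n-1,n)$ it invokes a classical transformation formula of Lauricella (\cite[p 149]{L}) adapted to the substitution $y_i\mapsto y_i/y_{n-1}$, $y_{n-1}\mapsto 1/y_{n-1}$. You instead deduce the invariance abstractly from the $S_n$-equivariance of the defining system together with the uniqueness of the normalized analytic solution at $\mathbf{1}$, which the paper has just established via the identification with $(E_D)$ for $\gamma=nk$; your conjugation identity $\Delta_{ij}(w\cdot\varphi)=w\cdot(\Delta_{w^{-1}(i),w^{-1}(j)}\varphi)$ and the symmetry $\Delta_{ab}=\Delta_{ba}$ are exactly right, and the point $\mathbf{1}$ being $S_n$-fixed with $\varphi(\mathbf{1})=1$ closes the argument (the invariance then propagates from a neighborhood of $\mathbf{1}$ to the full domain by the identity theorem, which is worth a sentence). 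What each approach buys: yours is cleaner and avoids any special-function identity, at the cost of leaning on the uniqueness/holonomicity statement; the paper's is more computational but self-contained at the level of classical formulas, and, read in reverse, your argument actually furnishes a proof of Lauricella's transformation formula in this parameter specialization, which is a pleasant by-product of the structural viewpoint.
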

\begin{proof}
Recall that $y_i=z_i/z_n$ for $1\leq i\leq n-1$. 
The transposition $(i,i+1)\,\,(1\leq i\leq n-2)$ interchanges $y_i$ and $y_{i+1}$. Since $\beta_1=\cdots=\beta_{n-1}=k$, 
$\varphi(y)$ is invariant under the transposition  $(i,i+1)\,\,(1\leq i\leq n-2)$.

By the transposition $(n-1,n)$, $y_1,\dots,y_{n-2},y_{n-1}$ change to $y_1/y_{n-1},\dots,y_{n-2}/y_{n-1}$, $1/y_{n-1}$, respectively. 
It follows from the transformation formula (\cite[p 149]{L})
\begin{align*}
F_D & (\alpha,  \beta_1,  \dots,\beta_{n-1},\gamma;\,x_1, \dots,x_{n-1})
 =(1-x_{n-1})^{-\alpha} \\
& 
\times F_D  \left(\alpha,\beta_1,\dots,\beta_{n-2},\gamma-\beta_1-\cdots-\beta_{n-1},\gamma; 
 \frac{x_{n-1}-x_1}{x_{n-1}-1},\dots,\frac{x_{n-1}-x_{n-2}}{x_{n-1}-1},\frac{x_{n-1}}{x_{n-1}-1}\right)
\end{align*}
that $\varphi(y)$ is invariant under the transposition $(n-1,n)$. Since $S_n$ is generated by
 the transpositions $(1,2),\,(2,3),\dots,(n-1,n)$, the lemma is proved. 
\end{proof}

Now we state the main result of this paper, which asserts that the hypergeometric function 
of type $A_{n-1}$ with parameter $\lambda(\nu,k)$ defined by  (\ref{eqn:degpar}) is 
written by Lauricella's $F_D$ as in  (\ref{eqn:symsol1}). 
In the case of $k=1/2$, that is the case of 
the zonal spherical function on $SL(n,\mathbb{R})/SO(n)$, this theorem is given 
by Sekiguchi \cite{Se0, Se}. 

\begin{thm}\label{thm:main}
Assume $k\in\mathbb{C}\setminus S$ and $\nu\in\mathbb{C}$. Then we have
\[
F(\lambda(\nu,k),k;z)=
(y_1\cdots y_{n-1})^{-\frac{\nu}{n}}F_D(
-\nu,k,\dots,k,nk;\,1-y_1,\dots,1-y_{n-1}), 
\]
where $\lambda(\nu,k)$ and $y_i$ are given by \eqref{eqn:degpar} and \eqref{eqn:change}. 
\end{thm}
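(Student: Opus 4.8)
The plan is to deduce the theorem from the uniqueness already recorded in \eqref{eqn:symsol1}. That statement identifies the right-hand side
\[
\varphi(z)=(y_1\cdots y_{n-1})^{-\frac{\nu}{n}}F_D(-\nu,k,\dots,k,nk;\,1-y_1,\dots,1-y_{n-1})
\]
as the unique solution of \eqref{eqn:euler} and \eqref{eqn:deq1} that is holomorphic at $z=\mathbf{1}$ with $\varphi(\mathbf{1})=1$, and the preceding lemma shows that $\varphi$ is $W$-invariant. Now $F(\lambda(\nu,k),k)$ is $W$-invariant, real-analytic near $\mathbf{1}$, satisfies \eqref{eqn:euler} in the GL-picture, and equals $1$ at $\mathbf{1}$; hence the whole theorem reduces to one point: that $F(\lambda(\nu,k),k)$ satisfies the \emph{full} system \eqref{eqn:deq1}, and not merely its sum \eqref{eqn:casimir}=\eqref{eqn:hoseco}, which it obeys because it is a Heckman-Opdam hypergeometric function. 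Granting this, $F(\lambda(\nu,k),k)$ and $\varphi$ solve the same holonomic system with the same normalization, so they coincide.

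To attack the individual equations in \eqref{eqn:deq1}, I would use the connection formula of Proposition~\ref{prop:deghs}(i),
\[
F(\lambda(\nu,k),k)=\sum_{i=1}^{n} c(w_i\lambda(\nu,k),k)\,\Phi(w_i\lambda(\nu,k),k),
\]
and examine how each $\Delta_{ij}$ acts on the Harish-Chandra series. The leading exponent already behaves well: with $\mu_0=\lambda(\nu,k)-\rho(k)=(-\tfrac{\nu}{n},\dots,-\tfrac{\nu}{n},\tfrac{(n-1)\nu}{n})$, a direct computation from \eqref{eqn:dij}, expanding $\tfrac{z_i+z_j}{z_i-z_j}=-1-2\sum_{m\ge 1}(z_i/z_j)^m$ in the chamber $z_1<\cdots<z_n$, shows that the leading coefficient of $\Delta_{ij}z^{\mu_0}$ equals $-\nu(\nu+nk)/n^2$ for \emph{every} pair $i<j$ --- separately in the case $i,j\le n-1$ and in the case $j=n$. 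Thus $\bigl(\Delta_{ij}+\tfrac{\nu(\nu+nk)}{n^2}\bigr)\Phi(\lambda(\nu,k),k)$ has no leading term. The remaining work is to carry this through all orders of the expansion and across the $c$-function-weighted sum, equivalently to show that the rank-$n$ solution space of \eqref{eqn:euler} and \eqref{eqn:deq1} is spanned by the $n$ series $\Phi(w_i\lambda(\nu,k),k)$; since each of these solves the entire Heckman-Opdam system, so does any element of the span, in particular $\varphi$.

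The main obstacle is precisely this passage from $W$-invariant to non-$W$-invariant information. Invariance of $F(\lambda(\nu,k),k)$ only tells us that the functions $\bigl(\Delta_{ij}+\tfrac{\nu(\nu+nk)}{n^2}\bigr)F(\lambda(\nu,k),k)$ constitute a single $W$-orbit with vanishing sum, which does not by itself force each to vanish, so genuinely new input is required. I expect the clean way in is to compute the cusp-asymptotics of $\varphi$ from the Lauricella connection formulas for $F_D$ and to verify that the resulting expansion is exactly $\sum_i c(w_i\lambda(\nu,k),k)\Phi(w_i\lambda(\nu,k),k)$, which simultaneously identifies the span above and yields $\varphi=F(\lambda(\nu,k),k)$. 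As an independent confirmation and a fallback, the identity first follows for $\nu\in\mathbb{Z}_+$ from Proposition~\ref{prop:deghs}(ii), the identification of $\Phi(\lambda(\nu,k),k)$ with the Jacobi polynomial $P(\lambda(\nu,k)-\rho(k),k)$ (a Jack polynomial), and Tamaoka's theorem (Theorem~\ref{thm:main0}); after stripping the common factor $(y_1\cdots y_{n-1})^{-\nu/n}$, the Taylor coefficients at $\mathbf{1}$ of both sides are rational in $\nu$, so their agreement on the infinite set $\mathbb{Z}_+$ propagates to all $\nu\in\mathbb{C}$.
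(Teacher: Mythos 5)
Your reduction is the right one, and your parenthetical remark that it suffices ``to show that the rank-$n$ solution space of \eqref{eqn:euler} and \eqref{eqn:deq1} is spanned by the $n$ series $\Phi(w_i\lambda(\nu,k),k)$'' is precisely the key lemma of the paper's proof. But you do not prove it: you verify only that the top-order coefficient of $\Delta_{ij}z^{\mu_0}$ comes out right for the single exponent $\mu_0=\lambda(\nu,k)-\rho(k)$, and you defer everything else to ``remaining work'' and to an expected computation of cusp asymptotics via Lauricella connection formulas. That deferred step is the whole proof. The paper closes it without computing any connection coefficients: since \eqref{eqn:euler}--\eqref{eqn:deq1} is equivalent to the system $(E_D)$ with parameters \eqref{eqn:abc}, it is holonomic of rank $n$ and, for generic $(k,\nu)$, has a basis of series solutions $u_1,\dots,u_n$ at $y=0$ with explicitly known leading exponents (the cited results of Gel'fand--Zelevinsky--Kapranov, Saito--Sturmfels--Takayama, and Goto). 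Pulling these back through \eqref{eqn:change} and \eqref{eqn:gauge}, each $\varphi_j$ is a series solution of the \emph{single} equation \eqref{eqn:casimir} $=$ \eqref{eqn:hoseco} with characteristic exponent $w_j\lambda(\nu,k)-\rho(k)$ and leading coefficient $1$, hence equals $\Phi(w_j\lambda(\nu,k),k)$ by the uniqueness in \eqref{eqn:hcser}; no term-by-term analysis of how $\Delta_{ij}$ acts on the series is needed. Consequently $\varphi$ of \eqref{eqn:symsol1} lies in the span of the $\Phi(w_j\lambda(\nu,k),k)$, hence solves all of \eqref{eqn:hgs}, and the theorem follows for generic parameters, then for all $k\in\mathbb{C}\setminus S$ and $\nu\in\mathbb{C}$ by analytic continuation, from the uniqueness of the normalized $W$-invariant real-analytic solution of \eqref{eqn:hgs}. (The paper thus concludes by uniqueness for the Heckman--Opdam system rather than, as you propose, by uniqueness for $(E_D)$; your direction also works once the spanning lemma is in hand, but it is that lemma which is missing from your write-up.)

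Your fallback via $\nu\in\mathbb{Z}_+$ also has a gap as stated: you assert that the Taylor coefficients of $F(\lambda(\nu,k),k;z)$ at $z=\mathbf{1}$ are rational in $\nu$, which is not justified --- Opdam's theorem gives only that they are entire in $\nu$, and $\mathbb{Z}_+$ is not a uniqueness set for entire functions. The standard way to make this interpolation rigorous, and what the paper alludes to in its remark that Theorem~\ref{thm:main} can be deduced from Theorem~\ref{thm:main0} ``in the same manner as the proof of \cite[Theorem~4.2]{BO},'' is a Carlson-type argument requiring growth estimates in $\nu$, not rationality.
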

\begin{proof}
For $2\leq j\leq n$, 
there exists a unique series solution $u_j(y)$ with the leading term 
$\prod_{i=n-j+2}^{n-1}y_i^{-k}y_{n-j+1}^{(j-1)k+\nu}$ for the system $(E_D)$ with \eqref{eqn:abc} 
that converges on a neighbourhood of $y=0$ in 
$\{y\in \mathbb{R}^{n-1}\,:\,0<y_1<y_2<\cdots <y_{n-1}\}$. Moreover, the set of 
$u_1(y):=F_D(-\nu,k,\dots,k,-\nu-k+1;y)$ and $u_j(y)\,\,(2\leq j\leq n)$ forms a basis of 
local solutions of $E_D$ for generic $k$ and $\nu$ (\cite[Section~3.3.1 (f)]{GKZ}, 
\cite[Section 1.5]{SST}, \cite[Section 5]{goto}). 

For $1\leq i\leq n$, let $\varphi_i$ denote the solution of the system of equations \eqref{eqn:euler} and \eqref{eqn:deq1} corresponding to $u_i$ by \eqref{eqn:change} and \eqref{eqn:gauge}. 
Then $\varphi_i$ is a solution of \eqref{eqn:casimir} with the characteristic exponent 
$w_i\lambda(\nu,k)-\rho(k)$ and the leading coefficient $1$. Thus $\varphi_i=\Phi(w_i\lambda(\nu,k),k)$ and 
it is a solution of the hypergeometric system  \eqref{eqn:hgs} with $\lambda=\lambda(\nu,k)$. 
Since $\{\varphi_i\,:\,1\leq i\leq n\}$ forms a basis of the solution space of  the system of equations \eqref{eqn:euler} and \eqref{eqn:deq1} for generic $k$ and $\nu$, \eqref{eqn:symsol1} 
is a solution of  \eqref{eqn:hgs} with $\lambda=\lambda(\nu,k)$ for any $k\in\mathbb{C}\setminus S$ and 
$\nu\in\mathbb{C}$ by analytic continuation.

Thus \eqref{eqn:symsol1} is a $S_n$ invariant solution of \eqref{eqn:hgs} with $\lambda=\lambda(\nu,k)$ 
that is real analytic and $\varphi(\mathbf{1})=1$. 
Hence $\varphi(z)=F(\lambda(\nu,k),k,z)$ by the uniqueness of the 
hypergeometric function.  
\end{proof}

\begin{exmp}
We give examples of $A_1$ and $A_2$. Assume $k\in\mathbb{C}\setminus S$. 

First we consider the case of $A_1$. Lauricella's $F_D$ of one variable is the Gauss 
hypergeometric function ${}_2 F_1$. From Proposition~\ref{prop:deghs} and Theorem~\ref{thm:main}, it holds on $A_+$ that 
\begin{align*}
F(\lambda & (\nu,k),k;z)=
y_1^{-\frac{\nu}{2}}{}_2 F_1(-\nu,k,2k;1-y_1) \\
 = & \frac{\Gamma(2k)\Gamma(\nu+k)}{\Gamma(k)\Gamma(\nu+2k)}y_1^{-\frac{\nu}{2}}{}_2F_1(-\nu,k,-\nu-k+1;y_1)  \\
& +\frac{\Gamma(2k)\Gamma(-\nu+k)}{\Gamma(k)\Gamma(-\nu)}y_1^{\frac{\nu}{2}+k}{}_2 F_1(\nu+2k,k,\nu+k+1;y_1).
\end{align*}
The above equalities are well-known formulae for the hypergeometric function of type $A_1$ and the 
Gauss hypergeometric function (\cite[Example 6.3]{Op:book}, \cite[proof of Theorem~4.3.6]{Hec}, \cite{Bateman}). 
Note that $F(\lambda(\nu,k),k)$ can be written by the Jacobi function (\cite{Koornwinder})
\begin{equation}\label{eqn:jacobi}
F(\lambda(\nu,k),k;z)=\phi_{2\sqrt{-1}(\nu+k)}^{(k-1/2,k-1/2)}\left(\frac{t}{2}\right):=
{}_2 F_1\left(-\nu,\nu+2k,k+\frac12;-\sinh^2\frac{t}{2}\right), 
\end{equation}
where $z=(e^t,e^{-t})$ and $y_1=e^{2t}$. If $\nu\in\mathbb{Z}_+$, then
\[
F(\lambda(\nu,k),k;z)=\frac{(k)_\nu}{(2k)_\nu}y_1^{-\frac{\nu}{2}}{}_2F_1(-\nu,k,-\nu-k+1;y_1)
\]
and from \eqref{eqn:jacobi}
\[
F(\lambda(\nu,k),k;z)=\frac{\nu !}{(2k)_\nu}C_\nu^{(k)}(\cosh t), 
\]
where $C_\nu^{(k)}$ denote the Gegenbauer polynomial (\cite[\S 3.15.1]{Bateman}). 

Next we consider the case of $A_2$. 
Lauricella's $F_D$ of two variables is  Appell's $F_1$ function. 
From Proposition~\ref{prop:deghs} and Theorem~\ref{thm:main}, it holds on $A_+$ that 
\begin{align*}
F(\lambda & (\nu,k),k;z)=
(y_1y_2)^{-\frac{\nu}{3}}  F_1(-\nu,k,k,3k;1-y_1,1-y_2) \\
 = &\frac{\Gamma(3k)\Gamma(\nu+k)}{\Gamma(k)\Gamma(\nu+3k)}(y_1y_2)^{-\frac{\nu}{3}}
F_1(-\nu,k,k,-\nu-k+1;y_1,y_2) \\
&+\frac{\Gamma(3k)\Gamma(\nu+2k)\Gamma(-\nu-k)}{\Gamma(k)\Gamma(\nu+3k)\Gamma(-\nu)}y_1^{\frac{2\nu}{3}+k}y_2^{-\frac{\nu}{3}}
G_2(k,k,\nu+2k,-\nu-k;-y_1/y_2,-y_2) \\
& + \frac{\Gamma(3k)\Gamma(-\nu-2k)}{\Gamma(k)\Gamma(-\nu)}(y_1^{-2}y_2)^{-\frac{\nu}{3}-k}
F_1(\nu+3k,k,k,\nu+2k+1;y_1/y_2,y_1). 
\end{align*}
Here  
\[
G_2(\alpha,\alpha',\beta,\beta',x,y)=
\sum_{m,n=0}^\infty (\alpha)_m(\alpha')_n(\beta)_{n-m} (\beta')_{m-n}\frac{x^my^n}{m!n!}, 
\]
where $(\alpha)_n=\Gamma(\alpha+n)/\Gamma(\alpha)$. 
The above equality among Appell's $F_1$ and $G_2$ functions is a special case of 
\cite[(19)]{Olsson}. 
If $\nu\in\mathbb{Z}_+$, then
\[
F(\lambda(\nu,k),k;z)=\frac{(k)_\nu}{(3k)_\nu}(y_1y_2)^{-\frac{\nu}{3}}
F_1(-\nu,k,k,-\nu-k+1;y_1,y_2)
\]
or
\[
P(\lambda(\nu,k)-\rho(k),k;z)=(y_1y_2)^{-\frac{\nu}{3}}
F_1(-\nu,k,k,-\nu-k+1;y_1,y_2), 
\]
where $P(\lambda(\nu,k)-\rho(k),k;z)$ is the Jacobi polynomial of Heckman and Opdam. 

\end{exmp}

\section{
The case of $\nu\in\mathbb{Z}_+$
}

In this section assume that 
$k>0$ and $\nu\in\mathbb{Z}_+$. 
Put
\begin{equation}
\mu(\nu)=\left(-\frac{\nu}{n},\cdots,-\frac{\nu}{n},\frac{(n-1)\nu}{n}\right).
\end{equation}
Then 
it follows from Proposition~\ref{prop:deghs} and Theorem~\ref{thm:main} that 
\begin{equation*}
P(\mu(\nu),k;z)=\frac{(nk)_\nu}{(k)_\nu}(y_1\cdots y_{n-1})^{-\frac{\nu}{n}}F_D(
-\nu,k,\dots,k,nk;\,1-y_1,\dots,1-y_{n-1})
\end{equation*}
and
\begin{equation*}
P(\mu(\nu),k;z)=(y_1\cdots y_{n-1})^{-\frac{\nu}{n}}F_D(
-\nu,k,\dots,k,nk;y_1,\dots,y_{n-1}),
\end{equation*}
where $P(\mu(\nu),k)$ is the Jacobi polynomial of Heckman and Opdam. 

The Jacobi polynomial for the root system of type $A_{n-1}$ is essentially 
the Jack polynomial. 
A partition $\lambda$ of length equal or less than $n$ is a sequence 
$\lambda=(\lambda_1,\dots,\lambda_n)$ of nonnegative integers such that 
$\lambda_1\geq \lambda_2\geq \cdots \lambda_n\geq 0$. Define $|\lambda|=\sum_{i=1}^n \lambda_i$. 
For two partitions $\lambda$ and $\mu$ we write $\mu\leq \lambda$ if 
$|\mu|=|\lambda|$ and $\sum_{i=1}^j \mu_j\leq \sum_{i=1}^j \lambda_i$ for all $j\geq 1$. 

For a partition $\lambda$ of length equal or less than $n$ define the monomial symmetric function 
$m_\lambda$ by
\[
m_\lambda=\sum_{\alpha\in S_n\lambda}x^\alpha.
\]
There exists a unique $P_\lambda^{(1/k)}$ that satisfies the following conditions:
\begin{align}
& P_\lambda^{(1/k)}=\sum_{\mu\leq \lambda}v_{\lambda\mu}m_\mu\qquad v_{\lambda\mu}\in\mathbb{C}(k),\qquad 
v_{\lambda\lambda}=1, \label{eqn:jack1}\\
& L(k)P_\lambda^{(1/k)}=h(\lambda)P_\lambda^{(1/k)},\qquad
h(\lambda)=\sum_{i=1}^n \lambda_i(\lambda_i+k(n+1-2i)).\label{eqn:jack2}
\end{align}
We call $J^{(1/k)}_\lambda(z)$ the Jack polynomial  (\cite{Mac0, Mac}).
From \cite[Proposition 3.3]{BO} we have
\begin{equation}\label{eqn:hojack}
P_\lambda^{(1/k)}(z)=P(\pi(\lambda),k;z)\quad (z\in A)
\end{equation}
for a partition $\lambda=(\lambda_1,\dots,\lambda_n)$ of length equal or less than $n$. 
Here $\pi(\lambda)\in P_+
$ is given by
\[
\pi(\lambda)=\sum_{i=1}^n\lambda_i e_i-\frac1n\left(\sum_{i=1}^n\lambda_i \right)
\left(\sum_{i=1}^n e_i\right).
\]


Thus we have the following result as a corollary of Theorem~\ref{thm:main}. 


\begin{thm}[Tamaoka~\cite{T}]\label{thm:main0}
Assume $k>0$, $p,\,q\in \mathbb{Z}_+$ and $p\geq q$. Then for $z\in\mathbb{C}^n$
\begin{align*}
P_{(p,q,\cdots,q)}^{(1/k)}(z_{1},\cdots ,z_{n})
& =\frac{(nk)_{p-q}}{(k)_{p-q}}
\prod_{i=1}^{n-1}z_{i}^{q}z_{n}^{p}
F_{D}\left(q-p,k,\cdots ,k,nk;
1-\frac{z_{1}}{z_{n}},\cdots,1-\frac{z_{n-1}}{z_{n}}\right)\\
& =\prod_{i=1}^{n-1}z_{i}^{q}z_{n}^{p}
F_{D}\left(q-p,k,\cdots ,k,q-p-k+1;
\frac{z_{1}}{z_{n}},\cdots,\frac{z_{n-1}}{z_{n}}\right).
\end{align*}
\end{thm}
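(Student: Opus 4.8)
The plan is to obtain both identities as corollaries of Theorem~\ref{thm:main} via the dictionary \eqref{eqn:hojack} between Jack polynomials and Heckman--Opdam Jacobi polynomials; the two equalities differ only in which series representation of $F_D$ one substitutes, so I would treat them in parallel. Throughout I set $\nu=p-q\in\mathbb{Z}_+$, which lies in $\mathbb{C}\setminus S$ together with the assumption $k>0$.

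The first step is to pass from the polynomial picture on $\mathbb{C}^n$ to the degree-zero picture on $A$. For $\lambda=(p,q,\dots,q)$ one has $|\lambda|=p+(n-1)q$, and computing $\pi(\lambda)=\sum_i\lambda_i e_i-\tfrac1n|\lambda|\sum_i e_i$ shows that its entries form the multiset $\{-\nu/n,\dots,-\nu/n,(n-1)\nu/n\}$, i.e. $\pi(\lambda)$ lies in the $W$-orbit of the dominant weight $\mu(\nu)=\lambda(\nu,k)-\rho(k)$. Since the Jacobi polynomial is $S_n$-invariant, \eqref{eqn:hojack} gives $P^{(1/k)}_\lambda=P(\mu(\nu),k)$ on $A$. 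To recover the homogeneous polynomial off $A$, I would use that $P(\mu(\nu),k)$ satisfies the Euler equation \eqref{eqn:euler}, hence is degree-zero homogeneous, so that
\[
P^{(1/k)}_{(p,q,\dots,q)}(z)=(z_1\cdots z_n)^{|\lambda|/n}\,P(\mu(\nu),k;z)
\]
as functions on $\mathbb{R}^n_{>0}$, the right-hand side being homogeneous of degree $|\lambda|$.

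Next I would express $P(\mu(\nu),k)$ through $F_D$ in the two required ways. By Proposition~\ref{prop:deghs}(ii) together with \eqref{eqn:hojacobi} and \eqref{eqn:cfdeg2}, for $\nu\in\mathbb{Z}_+$ one has $F(\lambda(\nu,k),k)=\tfrac{(k)_\nu}{(nk)_\nu}P(\mu(\nu),k)$, so Theorem~\ref{thm:main} produces the representation with parameter $nk$ and arguments $1-y_i$, carrying the constant $\tfrac{(nk)_\nu}{(k)_\nu}$. For the second representation I would use that $P(\mu(\nu),k)=\Phi(\lambda(\nu,k),k)$ is exactly the basic solution $\varphi_1$ of the proof of Theorem~\ref{thm:main}, which corresponds under \eqref{eqn:change} and \eqref{eqn:gauge} to $u_1=F_D(-\nu,k,\dots,k,-\nu-k+1;y)$; this yields the representation with parameter $-\nu-k+1$ and arguments $y_i$, now without the extra constant. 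In both cases I would substitute $y_i=z_i/z_n$ and simplify the prefactor $(z_1\cdots z_n)^{|\lambda|/n}(y_1\cdots y_{n-1})^{-\nu/n}$; a short exponent count gives the exponent $q$ for each $z_i$ with $i<n$ and $p$ for $z_n$, so this prefactor collapses to $\prod_{i=1}^{n-1}z_i^{q}\,z_n^{p}$, and with $\alpha=-\nu=q-p$ both displayed formulas follow.

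It remains to upgrade the identity from $z\in A_+$, where Theorem~\ref{thm:main} is stated, to all $z\in\mathbb{C}^n$. Here I would exploit that $\nu=p-q\ge 0$, so $\alpha=q-p\in\mathbb{Z}_{\le 0}$ makes each $F_D$ terminate and become a polynomial in its arguments; multiplying by the monomial $\prod_{i<n}z_i^q z_n^p$ clears the $z_n$-denominators introduced by $y_i=z_i/z_n$, so the right-hand sides are genuine polynomials in $z_1,\dots,z_n$, and the identity extends from $A_+$ to $\mathbb{C}^n$ by Zariski density. I expect the main bookkeeping obstacle to be the homogeneity and Weyl-orbit translation of the first step: correctly identifying $\pi(\lambda)$ with $\mu(\nu)$ and verifying that the combined prefactor collapses precisely to $\prod_{i<n}z_i^q z_n^p$. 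Once that is settled, the theorem is a direct corollary.
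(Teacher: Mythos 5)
Your proposal is correct and follows essentially the same route as the paper, which obtains Theorem~\ref{thm:main0} by combining Theorem~\ref{thm:main} with Proposition~\ref{prop:deghs}(ii), the identification $\Phi(\lambda(\nu,k),k)=P(\mu(\nu),k)=\varphi_1$, and the Jacobi--Jack dictionary \eqref{eqn:hojack}; you merely make explicit the bookkeeping (the $W$-orbit identification of $\pi((p,q,\dots,q))$ with $\mu(\nu)$, the degree-$|\lambda|$ homogeneous extension off $A$, the collapse of the prefactor to $\prod_{i<n}z_i^qz_n^p$, and the polynomial extension to $\mathbb{C}^n$) that the paper leaves implicit. The only cosmetic point is that Zariski density should be invoked for the cone over $A_+$ in $\mathbb{R}^n_{>0}$ rather than for $A_+$ itself, which your homogeneity step already supplies.
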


\begin{rem}
One of the authors~\cite{T} 
proves Theorem~\ref{thm:main0} 
without using the Heckman-Opdam theory. He just used  the characterization 
of the Jack polynomial by the conditions \eqref{eqn:jack1}, \eqref{eqn:jack2} and 
properties of the Lauricella hypergeometric function. 

In view of \eqref{eqn:hojacobi} and \eqref{eqn:hojack}, Theorem~\ref{thm:main0} asserts that 
Theorem~\ref{thm:main} holds for $k>0$ and $\nu\in\mathbb{Z}_+$. We can deduce 
Theorem~\ref{thm:main} from 
Theorem~\ref{thm:main0} in the same manner as the proof of \cite[Theorem~4.2]{BO}. 
\end{rem}

\section{
Trigonometric Dunkl operators
}
\label{sect:dunkl}

First we review the trigonometric Dunkl operator or the Cherednik operator in the $GL_n$ case (\cite{C1, Op:book}). 
For $1\leq i\leq n$, define the trigonometric Dunkl operator ${T}_i$ (\cite[\S 3.5]{C1}) by
\[
{T}_i=\vartheta_i+k\sum_{i<j}\frac{z_i}{z_i-z_j}(1-\sigma_{ij})+k\sum_{i>j}\frac{z_j}{z_i-z_j}(1-\sigma_{ij})
+\rho(k)_i.
\]
Here $\sigma_{ij}$ is the permutation $(i \,j)$ that acts as the transposition of the  coordinates $z_i$ and $z_j$. 
Notice that the choice of the positive system of $R$ to define the trigonometric Dunkl operators is opposite to 
that of \cite[\S 3.5]{C1}. 
We take $-R_+$ as the positive system of $R$ to define $T_i$. 

The Cherednik operators satisfy the following relations:
\begin{align}
& [T_i,T_j]=0\quad (1\leq i,\,j\leq n), \quad \sigma_{i}T_j=T_j \sigma_{i}\quad (j\not=i,\,i+1),\label{eqn:r1}\\
& \sigma_{i}T_i-T_{i+1}\sigma_{i}=-k, \label{eqn:r2}
\end{align}
where $\sigma_i=\sigma_{i\,i+1}$ $(1\leq i\leq n-1)$. 
(\ref{eqn:r1}) and (\ref{eqn:r2}) are the defining relations of the degenerate affine Hecke algebra 
$
\mathbf{H}
=\langle \mathbb{C}S_n,x_1,\dots,x_n\rangle$ of 
type $GL_n$, if we replace $T_i$ by $x_i$ in the above relations. 

For $p\in S(\mathfrak{a}_\mathbb{C})$, put
\[
T_p=p(T_1,\dots,T_n).
\]
Write 
\[
T_p=\sum_{w\in W}D_w^{(p)}w, 
\]
where $D_w^{(p)}\,\,(w\in W)$ are differential operator on $A$ and 
define the differential operator $D_p$ on $A$ by
\[
D_p=\sum_{w\in W}D_w^{(p)}.
\]
$D_p$ is the differential operator that has the same restriction to symmetric functions as $T_p$. 

For $1\leq m\leq n$, let $e_m(x)$ denote the $m$-th elementary symmetric polynomial:
\[
e_m(x)=\sum_{1\leq i_1<\cdots<i_m\leq n}x_{i_1}\cdots x_{i_m}.
\]
Then we have
\[
D_{e_1}=\vartheta_1+\cdots+\vartheta_n
\]
and
\[
D_{e_2}=\sum_{1\leq i<j\leq n}\vartheta_i\vartheta_j-\frac{k}{2}\frac{z_i+z_j}{z_i-z_j}(\vartheta_i-\vartheta_j)
-\frac{k^2}{4}\binom{n+1}{3}, 
\]
which are differential operators in (\ref{eqn:euler}) and (\ref{eqn:casimir}), respectively. 
For $p(x)=x_1^2+\cdots +x_n^2$, $D_p=L(k)+(\rho(k),\rho(k))$, where $L(k)$ is defined in (\ref{eqn:lk}). 
The commutative algebra $\mathbb{D}(k)$ mentioned in \S1 is 
\[
\mathbb{D}(k)=\{D_p\,:\,p\in S(\mathfrak{a}_\mathbb{C})^W\}
\]
and is generated by $L_{e_1},\dots,L_{e_n}$. Moreover, the algebra isomorphism 
$\gamma:\mathbb{D}(k)\rightarrow S(\mathfrak{a}_\mathbb{C})^W$ mentioned in \S1 is 
defined by $D_p\mapsto p$. 

The following proposition asserts that the 
differential operator $\Delta_{ij}$ given in \S\ref{sect:second} (\ref{eqn:deq1}) 
is also related with the trigonometric Dunkl operators. 

\begin{prop}[\cite{S}]
For $1\leq i<j\leq n$, define $p_{ij}\in S(\mathfrak{a}_\mathbb{C})$ by 
\[
p_{ij}(x)=\left(x_i-\rho(k)_i+\frac{\nu}{n}\right)\left(x_j-\rho(k)_j+k+\frac{\nu}{n}\right).
\]
Then we have
\[
D_{p_{ij}}=\Delta_{ij}+\frac{\nu(\nu+nk)}{n^2}.
\]
\end{prop}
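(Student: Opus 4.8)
The plan is to compute the Cherednik element $T_{p_{ij}}$ explicitly and then apply the symmetrization that defines $D_{p_{ij}}$. Put $\tilde T_i=T_i-\rho(k)_i$, so that
\[
\tilde T_i=\vartheta_i+k\sum_{l\ne i}\phi_{il}(1-\sigma_{il}),\qquad
\phi_{il}=\begin{cases}\dfrac{z_i}{z_i-z_l}&(l>i),\\ \dfrac{z_l}{z_i-z_l}&(l<i),\end{cases}
\]
and set $a=\nu/n$, $b=k+\nu/n$. Since $p_{ij}(x)=(x_i-\rho(k)_i+a)(x_j-\rho(k)_j+b)$, we have $T_{p_{ij}}=(\tilde T_i+a)(\tilde T_j+b)$. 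To extract $D_{p_{ij}}=\sum_w D_w^{(p_{ij})}$ I would expand this product into the normal form $\sum_w D_w\,w$, moving every reflection to the right by the rules $\sigma_{il}\vartheta_m=\vartheta_{\sigma_{il}(m)}\sigma_{il}$ and $\sigma_{il}\,h=h^{\sigma_{il}}\sigma_{il}$ (for a multiplication operator $h$), and then replace each $w$ by the identity.

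The core of the argument is to see which operators survive the symmetrization. Writing each factor as (differential part) $+$ (reflection part) and multiplying out gives four groups of terms, and I expect three structural simplifications. First, the reflection part of $\tilde T_j+b$, once its reflections $\sigma_{jm}$ are set to the identity, produces $-k\sum_m\phi_{jm}$, which exactly cancels the rational summand $k\sum_m\phi_{jm}$ in its differential part; one is left with $P_i(\vartheta_j+b)$, where $P_i=\vartheta_i+a+k\Sigma_i$ and $\Sigma_i=\sum_{l\ne i}\phi_{il}$. Second, the reflection part of the left factor contributes $-k\sum_{l\ne i}\phi_{il}\bigl(\vartheta_{\sigma_{il}(j)}+b\bigr)$ (after the analogous cancellation of the inner $j$–reflections), in which only the transposition $l=j$ changes an index, producing $\vartheta_i$, while every $l\ne j$ keeps $\vartheta_j$. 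Third — and this is the point that makes the answer a two–variable operator — the functions $\phi_{il}$ with $l\ne i,j$ are independent of $z_j$, hence commute with $\vartheta_j$, so the contribution of the terms $l\ne i,j$ cancels against the $k\Sigma_i$ inside $P_i$, leaving only the $l=j$ term. Collecting everything I expect
\[
D_{p_{ij}}=(\vartheta_i+a)(\vartheta_j+b)+k\,\frac{z_i}{z_i-z_j}\,(\vartheta_j-\vartheta_i)
=\vartheta_i\vartheta_j+b\,\vartheta_i+a\,\vartheta_j+ab+k\,\frac{z_i}{z_i-z_j}\,(\vartheta_j-\vartheta_i).
\]

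It then remains to identify this with $\Delta_{ij}+\nu(\nu+nk)/n^2$. The constant term matches, since $ab=(\nu/n)(k+\nu/n)=\nu(\nu+nk)/n^2$. For the first–order part I would use the elementary identity
\[
-k\,\frac{z_i}{z_i-z_j}=-\frac{k}{2}\,\frac{z_i+z_j}{z_i-z_j}-\frac{k}{2},
\]
which rewrites the reflection term as $-\tfrac{k}{2}\tfrac{z_i+z_j}{z_i-z_j}(\vartheta_i-\vartheta_j)-\tfrac{k}{2}(\vartheta_i-\vartheta_j)$; absorbing the constant part into $b\,\vartheta_i+a\,\vartheta_j$ gives $(\nu/n+k/2)(\vartheta_i+\vartheta_j)$, because $b-k/2=a+k/2=\nu/n+k/2$. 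This turns the expression above into exactly $\Delta_{ij}+\nu(\nu+nk)/n^2$.

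The main obstacle is the bookkeeping in the normal–form expansion together with the verification of the three cancellations; conceptually the crux is that, after symmetrization, the only reflection that contributes a genuine term is the single transposition $\sigma_{ij}$, and all dependence on the remaining variables $z_l$ $(l\ne i,j)$ drops out. As a shortcut to guess and cross–check the final formula, I would first compute $T_{p_{ij}}f$ on a $W$–invariant $f$: there every $(1-\sigma_{il})f$ vanishes and $\tilde T_m f=\vartheta_m f$, so one reads off the same displayed expression for the restriction of $D_{p_{ij}}$ to symmetric functions. This only cross–checks the formula; the operator identity itself is what the full normal–form computation establishes.
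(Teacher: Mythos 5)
Your computation is correct. Note that the paper itself gives no proof of this proposition (it is quoted from \cite{S}), so there is nothing to compare against; your normal-form argument is a legitimate and complete verification. The key identity you arrive at,
\[
D_{p_{ij}}=\Bigl(\vartheta_i+\tfrac{\nu}{n}\Bigr)\Bigl(\vartheta_j+k+\tfrac{\nu}{n}\Bigr)+k\,\frac{z_i}{z_i-z_j}\,(\vartheta_j-\vartheta_i),
\]
checks out: writing $T_{p_{ij}}=(\tilde T_i+a)(\tilde T_j+b)$ and using that the symmetrization map $\sum_w D_w w\mapsto\sum_w D_w$ satisfies $\beta(AB)=\sum_w A_w\,\bigl(w\beta(B)w^{-1}\bigr)$, one gets $\beta(\tilde T_j+b)=\vartheta_j+b$ (your first cancellation), and in the left factor only the transposition $\sigma_{ij}$ moves the index $j$, so all terms $l\neq i,j$ cancel against $k\Sigma_i$ and only $-k\phi_{ij}(\vartheta_i+b)+k\phi_{ij}(\vartheta_j+b)$ survives. (A small remark: this last cancellation is a direct cancellation of compositions and does not actually require the commutation of $\phi_{il}$ with $\vartheta_j$ that you invoke, though the observation is harmless.) The final reduction via $\frac{z_i}{z_i-z_j}=\frac12\frac{z_i+z_j}{z_i-z_j}+\frac12$ and $b-\frac{k}{2}=a+\frac{k}{2}=\frac{\nu}{n}+\frac{k}{2}$, together with $ab=\nu(\nu+nk)/n^2$, indeed yields $\Delta_{ij}+\nu(\nu+nk)/n^2$.
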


\begin{rem}
Though the choice of the positive system to define the trigonometric Dunkl operators is not essential, 
we choose as above because it matches better with the characteristic exponents $w_i\lambda(\nu,k)-\rho(k)$ $(1\leq i\leq n)$ 
given in \S\ref{sect:hg} and the indicial equation 
\begin{equation}\label{eqn:indeq}
\displaystyle\left(\mu_i-\rho(k)_i+\frac{\nu}{n}\right)\left(\mu_j-\rho(k)_j+k+\frac{\nu}{n}\right)=0.
\end{equation}
of (\ref{eqn:deq1}) at infinity on $A_+$. 
If 
\[
\nu\not=-k,-2k,\dots,(-n+1)k,
\]
then the set of common solutions for (\ref{eqn:indeq}) for $1\leq i<j\leq n$ is 
$\{w_1\lambda(\nu,k),\dots,w_n\lambda(\nu,k)\}$, where $\lambda(\nu,k)$ and $w_j$ are given in 
\S\ref{sect:hg}.

This fact can be regarded as a special case of \cite[Theorem 9, Equation (27), Theorem 22]{O2}. 
Indeed, in \cite{O, O2}, Oshima constructed generators of annihilators of generalized Verma modules for $\mathfrak{gl}_n$ 
by using generalized Capelli operators.   
The deformation parameter $\varepsilon$ in \cite{O2} corresponds to $k$ in this paper. 
Using results in \cite{O2}, some part of results in this paper can be generalized to the case of 
arbitrary $\Theta\subset\{1,2,\dots,n\}$ as indicated in \cite{S}. We will discuss in detail elsewhere. 
\end{rem}

\begin{rem}
After we have finished our work, we noticed that the system of differential equations 
\eqref{eqn:euler}, \eqref{eqn:deq1} and its characteristic exponents are stated in 
\cite{chl}. 
\cite[Theorem 3.3]{chl} asserts that the system \eqref{eqn:euler}, \eqref{eqn:deq1}  is of rank $n$ 
and its solutions are also solutions of the hypergeometric system \eqref{eqn:hgs} 
with $\lambda=\lambda(\nu,k)$ without proof. 
\end{rem}

\end{document}